\newcommand{\Z}{\mathbb{Z}}
\newcommand{\N}{\mathbb{N}}
\newcommand{\D}{\mathcal{D}}
\newcommand{\G}{\mathcal{G}}
\newcommand{\K}{\mathcal{K}}
\newcommand{\RR}{\mathbb{R}}
\newcommand{\Oo}{\mathcal{O}}
\newcommand{\id}{{\operatorname{Id}}}
\newcommand{\aut}{{\operatorname{Aut}}}
\newcommand{\TSS}{{\overline{X}}}
\newcommand{\tss}{{\overline{\sigma}}}
\theoremstyle{theorem}
\newtheorem{theorem}{Theorem}[section]
\newtheorem{corollary}[theorem]{Corollary}
\newtheorem{lemma}[theorem]{Lemma}
\theoremstyle{definition}
\newtheorem{remark}[theorem]{Remark}
\title{Diagonal-preserving gauge-invariant isomorphisms of graph $C^*$-algebras}
\author{Toke Meier Carlsen}
\address{Department of Science and Technology\\University of the Faroe Islands\\
N\'oat\'un 3\\ FO-100 T\'orshavn\\ Faroe Islands}
\email{toke.carlsen@gmail.com}
\author{James Rout}
\address{School of Mathematics and Applied Statistics\\ University of Wollongong\\
NSW 2522\\ Australia}
\email{jdr749@uowmail.edu.au}
\date{\today}
\subjclass[2010]{Primary: 46L55; Secondary: 37A55, 37B10}
\keywords{Graph $C^*$-algebras, graph groupoids, gauge-invariant isomorphisms, groupoid cocycles, Cuntz--Krieger algebras, eventual conjugacy, conjugacy, subshifts, shift spaces.}
\thanks{The second author was partially supported by the ARC grant DP150101595, the Simons-Fundation grant 346300 and the Polish Government MNiSW 2015-2019 matching fund.}
\begin{document}
	
\maketitle

\begin{abstract}
	We study graph $C^*$-algebras equipped with generalised gauge actions, and characterise in terms of groupoids and groupoid cocycles when two graph $C^*$-algebras are isomorphic by a diagonal-preserving isomorphism that intertwines the generalised gauge actions. We apply this characterisation to show that two Cuntz--Krieger algebras are isomorphic by a diagonal-preserving isomorphism that intertwines the gauge actions if and only if the corresponding one-sided subshifts are eventually conjugate, and that the stabilisation of two Cuntz--Krieger algebras are isomorphic by a diagonal-preserving isomorphism that intertwines the gauge actions if and only if the corresponding two-sided subshifts are conjugate.
\end{abstract}

\section{Introduction}

It is well-known that the properties of the Cuntz--Krieger algebra $\Oo_A$ of a finite square $\{0,1\}$-matrix $A$ are closely connected to the properties of the one- and two-sided subshifts $X_A$ and $\TSS_A$ of $A$. Cuntz and Krieger proved for example in \cite[Proposition 2.17]{CK} that if $A$ and $B$ are finite square $\{0,1\}$-matrices both satisfying condition (I), and $X_A$ and $X_B$ are conjugate, then there exists a diagonal-preserving $*$-isomorphism between $\Oo_A$ and $\Oo_B$ which intertwines the gauge actions of $\Oo_A$ and $\Oo_B$ (the result actually says a bit more than that); and Cuntz proved in \cite[Theorem 2.3]{C2} that if $A$ and $B$ are finite square $\{0,1\}$-matrices such that $A$ and $B$ and their transposes satisfy condition (I), and $\TSS_A$ and $\TSS_B$ are conjugate, then there exists a diagonal-preserving $*$-isomorphism between the  stabilised Cuntz--Krieger algebras $\Oo_A\otimes\mathcal{K}$ and $\Oo_B\otimes\mathcal{K}$ which intertwines the gauge actions of $\Oo_A\otimes\mathcal{K}$ and $\Oo_B\otimes\mathcal{K}$ (this result was first proved under the additional assumption that $A$ and $B$ are irreducible and aperiodic by Cuntz and Krieger in \cite[Theorem 3.8]{CK}). Cuntz also proved in \cite[Theorem 2.4]{C2} that if $A$ and $B$ are finite square $\{0,1\}$-matrices such that $A$ and $B$ and their transposes satisfy condition (I), and $\TSS_A$ and $\TSS_B$ are flow equivalent, then there exists a diagonal-preserving $*$-isomorphism between the stabilised Cuntz--Krieger algebras $\Oo_A\otimes\mathcal{K}$ and $\Oo_B\otimes\mathcal{K}$ (this result was first proved under the additional assumption that $A$ and $B$ are irreducible and aperiodic by Cuntz and Krieger in \cite[Theorem 4.1]{CK}).

The connection between the properties of $\Oo_A$ and the properties of $X_A$ and $\TSS_A$ was further highlighted in \cite{MM} when Matsumoto and Matui presented, among several other interesting results, a converse to \cite[Theorem 4.1]{CK} by proving that if $A$ and $B$ are finite square $\{0,1\}$-matrices that are irreducible and not permutation matrices, and there is a diagonal-preserving $*$-isomorphism between the stabilised Cuntz--Krieger algebras $\Oo_A\otimes\mathcal{K}$ and $\Oo_B\otimes\mathcal{K}$, then $\TSS_A$ and $\TSS_B$ are flow equivalent \cite[Corollary 3.8]{MM}. To prove this result, Matsumoto and Matui used that the $C^*$-algebra $\Oo_A$ can be constructed as the (reduced) $C^*$-algebra of an étale groupoid and proved that if $A$ and $B$ are finite square $\{0,1\}$-matrices that are irreducible and not permutation matrices then there is a diagonal-preserving $*$-isomorphism between $\Oo_A$ and $\Oo_B$ if and only if the corresponding étale groupoids are isomorphic, and if and only if the one-sided subshifts $X_A$ and $X_B$ are continuously orbit equivalent \cite[Theorem 2.3]{MM}. 

Matsumoto has since then used this aproach to study gauge-invariant isomorphisms of Cuntz--Krieger algebras \cite{MCon,MUni}, and has, among other results, proved a converse to \cite[Proposition 2.17]{CK} in the irreducible case when he proved in \cite[Theorem 1.2]{MCon} that if $A$ and $B$ are finite square $\{0,1\}$-matrices that are irreducible and not permutation matrices, then there is a diagonal-preserving $*$-isomorphism between $\Oo_A$ and $\Oo_B$ that intertwines the gauge actions of $\Oo_A$ and $\Oo_B$ if and only if $X_A$ and $X_B$ are eventually conjugate.

In \cite{KPRR} Kumjian, Pask, Raeburn, and Renault extended the definition of Cuntz--Krieger algebras when they used groupoids to construct $C^*$-algebras from directed graphs that are not assumed to be finite. These graph $C^*$-algebras have since attracted a lot of interest, and by using that graph $C^*$-algebras can be constructed from groupoids, \cite[Theorem 2.3]{MM} has recently been transfered to the setting of graph $C^*$-algebras \cite{AER,BCW}.

In this paper we use groupoids to study diagonal-preserving isomorphisms of graph $C^*$-algebras that intertwine generalised gauge actions. Our main result is Theorem~\ref{thm:1} which characterises when there is a diagonal-preserving $*$-isomorphism between two graph $C^*$-algebras that intertwines two generalised gauge actions. The characterisation is given in terms of cocycle-preserving isomorphisms of the corresponding graph groupoids. We also present a ``stabilised version'' of this result in Theorem~\ref{thm:2}. 

By specialising to ordinary gauge actions we show in Theorem~\ref{thm:3} that there is a diagonal-preserving gauge-invariant $*$-isomorphism between two graph $C^*$-algebras if and only if the two graphs are eventually conjugate, and in Theorem~\ref{thm:4} that if we consider two finite graphs with no sinks or sources, then there is a diagonal-preserving gauge-invariant $*$-isomorphism between the stabilisation of the two graph $C^*$-algebras if and only if the two-sided edge shifts of the two graphs are conjugate.

As corollaries, we prove in Corollary~\ref{cor:2} a converse to \cite[Proposition 2.17]{CK} by generalising \cite[Theorem 1.2]{MCon} to the non-irreducible case, and we prove in Corollary~\ref{cor:3} a converse to \cite[Theorem 2.3]{C2}.

As with \cite[Theorem 2.3]{MM}, the results in this paper can be transfered to the setting of Leavitt path algebras. This will be done in \cite{CR}.

\section{Definitions and notation}

For the benefit of the reader, we recall in this section the definitions of directed graphs and their boundary path spaces, graph groupoids, and graph $C^*$-algebras. Most of this is standard and can be found in many other papers.

%We emphasise that the Leavitt path algebra is a $*$-algebra with an involution that extend a given involution of the coefficient ring (the latter could be the trivial involution), and we introduce the notion of a integral $*$-domain (which is an integral domain with is also a $*$-ring), and recall the notion of a kind unital $*$-subring of $\mathbb{C}$ introduced in \cite{Car}.

\subsection{Directed graphs and their boundary path spaces}

A \emph{directed graph} $E$ is a quadruple $E = (E^0,E^1,r,s)$ consisting of countable sets $E^0$ and $E^1$, and range and source maps $r,s:E^1 \to E^0$. An element of $v \in E^0$ is called a \emph{vertex} and an element of $e \in E^1$ is called an \emph{edge}.

A \emph{path} $\mu$ of length $n$ in $E$ is a sequence of edges $\mu = \mu_1 \dots \mu_n$ such that $r(\mu_i) = s(\mu_{i+1})$ for all $1 \le i \le n-1$. The set of paths of length $n$ is denoted $E^n$. We write $|\mu|$ for the length of $\mu$. The range and source maps extend to paths: $r(\mu) := r(\mu_n)$ and $s(\mu) := s(\mu_1)$. We regard the vertices $v \in E^0$ as paths of length $0$, and set $r(v):=s(v):=v$. For $v \in E^0$ and $n \in \N$, we define $v E^n :=\{ \mu \in E^n: s(\mu) = v\}$. We define $E^* := \bigcup_{n\in\N} E^n$ to be the collection of all paths of finite length. We define $E^0_{\operatorname{reg}} := \{ v \in E^0: vE^1 \text{ is finite and nonempty} \}$ and $E^0_{\operatorname{sing}} := E^0 \setminus E^0_{\operatorname{reg}}$. If $\mu = \mu_1 \dots \mu_m, \nu = \nu_1 \dots \nu_n \in E^*$ with $r(\mu) = s(\nu)$, then we let $\mu \nu := \mu_1 \dots \mu_m \nu_1 \dots \nu_n \in E^*$. A \emph{cycle} (sometimes called a \emph{loop} in the literature) in $E$ is a path $\mu \in E^* \setminus E^0$ such that $s(\mu) = r(\mu)$. An edge $e$ is an \emph{exit} to the cycle $\mu$ if there exists $i$ such that $s(e) = s(\mu_i)$ and $e \not = \mu_i$. A graph is said to satisfy \emph{condition (L)} if every cycle has an exit.

An \emph{infinite path} in $E$ is an infinite sequence $x_1 x_2 \dots$ of edges in $E$ such that $r(x_i) = s(x_{i+1})$ for all $i$. We let $E^\infty$ be the set of all infinite paths in $E$. The source maps extends to $E^\infty$: $s(x) := s(x_1)$. We let $|x|=\infty$ for $x\in E^\infty$. The \emph{boundary path space} of $E$ is the space $\partial E := E^\infty \bigcup \{\mu \in E^*: r(\mu) \in E^0_{\operatorname{sing}} \}.$ If $\mu = \mu_1 \dots \mu_m \in E^*$ and $x = x_1 x_2 \dots \in E^\infty$ with $r(\mu) = s(x)$, then we let $\mu x = \mu_1 \dots \mu_m x_1 x_2 \dots \in E^\infty$. For $v \in E^0$, we define $v \partial E := \{x \in \partial E: s(x)=v \}$.

For $\mu \in E^*$, the \emph{cylinder set} of $\mu$ is the set $Z(\mu) := \{\mu x \in \partial E: x \in r(\mu) \partial E \} \subseteq \partial E$. For $\mu \in E^*$ and a finite subset $F \subseteq r(\mu) E^1$, we define $Z(\mu \backslash F) := Z(\mu) \backslash \Big( \bigcup_{e \in F} Z(\mu e) \Big)$.

The boundary path space $\partial E$ is a locally compact Hausdorff space with the topology given by the basis $\{ Z(\mu \backslash F): \mu \in E^*,\ F \text{ is a finite subset of } r(\mu) \partial E \}$, and each such $Z(\mu \backslash F)$ is compact and open (see \cite[Theorems~2.1 and Theorem~2.2]{Web}).

For $n \in \N$, let $\partial E^{\ge n} := \{x \in \partial E: |x| \ge n\} \subseteq \partial E$. Then $\partial E^{\ge n} = \bigcup_{\mu \in E^n} Z(\mu)$ is an open subset of $\partial E$. Define the \emph{edge shift map} $\sigma_E: \partial E^{\ge 1} \to \partial E$ by $\sigma_E(x_1 x_2 x_3 \dots) = x_2 x_3 \dots$ for $x_1 x_2 x_3 \dots \in \partial E^{\ge 2}$ and $\sigma_E(e) = r(e)$ for $e \in \partial E \bigcap E^1$. Let $\sigma_E^0$ be the identity map on $\partial E$, and for $n \ge 1$, let $\sigma_E^n$ be the $n$-fold composition of $\sigma_E$ with itself. Then $\sigma_E^n$ is a local homeomorphism for all $n \in \N$. When we write $\sigma_E^n(x)$, we implicitly assume that $x \in \partial E^{\ge n}$.

\subsection{Graph groupoids}

The \emph{graph groupoid} of a directed graph $E$ is the locally compact, Hausdorff, \'{e}tale topological groupoid
\[\G_E = \{(x,m-n,y): x,y \in \partial E,\ m,n\in \N, \text{ and } \sigma_E^m(x) = \sigma_E^n(y)\},\] with product $(x,k,y)(w,l,z) := (x,k+l,z)$ if $y=w$ and undefined otherwise, and inverse $(x,k,y)^{-1} := (y,-k,x)$. The topology of $\G_E$ is generated by subsets of the form $$Z(U,m,n,V):= \{(x,m-n,y) \in \G_E: x \in U, y \in V, \sigma_E^m(x)=\sigma_E^n(y)\},$$ where $m,n\in\N$, $U$ is an open subset of $\partial E^{\ge m}$ such that $\sigma_E^m$ is injective on $U$, $V$ is an open subset of $\partial E^{\ge n}$ such that $\sigma_E^n$ is injective on $V$, and $\sigma_E^m(U) = \sigma_E^n(V)$. For $\mu,\nu \in E^*$ with $r(\mu) = r(\nu)$, let $Z(\mu,\nu) := Z(Z(\mu),|\mu|,|\nu|,Z(\nu))$. The map $x \mapsto (x,0,x)$ is a homeomorphism from $\partial E$ to the unit space $\G_E^0$ of $\G_E$. 

In this paper a \emph{cocycle} of a groupoid $\G$ is a groupoid homomorphism $c:\G\to\mathbb{Z}$, where we consider $\mathbb{Z}$ to be a groupoid with product and inverse given by the usual group operations. The function $c_E:\G_E\to\mathbb{Z}$ given by $c_E((x,k,y))=k$ is a continuous cocycle.

All isomorphisms between groupoids considered in this paper are, in addition to preserving the groupoid structure, homeomorphisms.

\subsection{Graph $C^*$-algebras}

The \emph{graph $C^*$-algebra} of a directed graph $E$ is the universal $C^*$-algebra $C^*(E)$ generated by mutually orthogonal projections $\{p_v: v \in E^0\}$ and partial isometries $\{s_e: e \in E^1\}$ satisfying
\begin{enumerate}
\item[(CK1)] $s_e^* s_e = p_{r(e)}$ for all $e \in E^1$;
\item[(CK2)] $s_e s_e^* \le p_{s(e)}$ for all $e \in E^1$;
\item[(CK3)] $p_v = \sum_{e \in vE^1} s_e s_e^*$ for all $v \in E^0_{\operatorname{reg}}$.
\end{enumerate}

There is a strongly continuous action $\lambda^E: \mathbb{T}\to\aut(C^*(E))$, called the \emph{gauge  action}, satisfying $\lambda^E_z(p_v) = p_v$ and $\lambda_z^E(s_e) =  zs_e$ for $z \in \mathbb{T}$, $v\in E^0$ and $e \in E^1$.

We let $s_v := p_v$ for $v \in E^0$, and for $n \ge 2$ and $\mu = \mu_1 \dots \mu_n \in E^n$, we let $s_\mu := s_{\mu_1} \dots s_{\mu_n}$. Then $\operatorname{span} \{s_\mu s_\nu^*: \mu, \nu \in E^*, r(\mu) = r(\nu) \}$ is dense in $C^*(E)$. We define $\D(E)$ to be the closure of $\operatorname{span} \{s_\mu s_\mu^*: \mu \in E^* \}$ in $C^*(E)$. Then $\D(E)$ is an abelian subalgebra of $C^*(E)$, and is isomorphic to the $C^*$-algebra $C_0(\partial E)$. Moreover, $\D(E)$ is a maximal abelian subalgebra of $C^*(E)$ if and only if $E$ satisfies condition $(L)$ (see \cite[Example~3.3]{NR}).

Theorem~3.7 of \cite{Web} shows that there is a unique homeomorphism $h_E$ from $\partial E$ to the spectrum of $\D(E)$ given by \[ h_E(x)(s_\mu s_\mu^*) = \begin{cases} 1 \quad \text{ if } x \in Z(\mu), \\ 0 \quad \text{ if } x \not \in Z(\mu). \end{cases} \]

There is a $*$-isomorphism from the $C^*$-algebra of $\G_E$ to $C^*(E)$ that maps $C_0(\G_E^0)$ onto $\D(E)$ (see \cite[Proposition~2.2]{BCW} and \cite[Proposition~4.1]{KPRR}).

\section{Gauge-invariant isomorphisms of graph $C^*$-algebras and cocycle-preserving isomorphisms of graph groupoids}

In this section, we prove our two main results Theorem~\ref{thm:1} and Theorem~\ref{thm:2}.

Let $E$ be a directed graph, and let $k:E^1\to\mathbb{R}$ be a function. Then $k$ extends to a function $k:E^*\to\mathbb{R}$ given by $k(v)=0$ for $v\in E^0$ and $k(e_1\dots e_n)=k(e_1)+\dots +k(e_n)$ for $e_1\dots e_n\in E^n$, $n\ge 1$. We then get a continuous cocycle $c_k:\G_E\to\mathbb{R}$ given by $c_k((\mu x,|\mu|-|\nu|,\nu x))=k(\mu)-k(\nu)$ and a \emph{generalised gauge action} $\gamma^{E,k}:\mathbb{R}\to\operatorname{Aut}(C^*(E))$ given by $\gamma^{E,k}_t(p_v)=p_v$ for $v\in E^0$ and $\gamma^{E,k}_t(s_e)=e^{ik(e)t}s_e$ for $e\in E^1$. 

If $k(e)=1$ for all $e\in E^1$, then $\gamma^{E,k}_{t}=\lambda^E_{e^{it}}$ for all $t\in\mathbb{R}$, where $\lambda^E$ is the usual gauge action on $C^*(E)$.  

\begin{theorem}\label{thm:1}
	Let $E$ and $F$ be directed graphs and $k:E^1\to\mathbb{R}$ and $l:F^1\to\mathbb{R}$ functions. The following are equivalent.
	\begin{enumerate}
		\item[(1)] There is an isomorphism $\Phi:\G_E\to\G_F$ satisfying $c_l(\Phi(\eta))=c_k(\eta)$ for $\eta\in\G_E$.
		\item[(2)] There is a $*$-isomorphism $\Psi:C^*(E)\to C^*(F)$ satisfying $\Psi(\mathcal{D}(E))=\mathcal{D}(F)$ and $\gamma_t^{F,l}\circ\Psi=\Psi\circ\gamma_t^{E,k}$ for $t\in\mathbb{R}$.
	\end{enumerate}
\end{theorem}

To prove the implication (2) $\implies$ (1) of Theorem~\ref{thm:1}, we need a lemma. We recall the extended Weyl groupoid $\G_{(C^*(E),\D(E))}$ constructed in \cite{BCW} from a graph $C^*$-algebra and its diagonal subalgebra. As defined in \cite{Ren2008}, the \emph{normaliser} of $\D(E)$ is the set 
$$N(\D(E)) := \{n \in C^*(E): ndn^*,\ n^*dn \in \D(E) \text{ for all } d \in \D(E) \}.$$
By \cite[Lemma~4.1]{BCW}, $s_\mu s_\nu^* \in N(\D(E))$ for all $\mu,\nu \in E^*$. For $n \in N(\D(E))$, let $\operatorname{dom}(n) := \{ x \in \partial  E: h_E(x)(n^*n) >0 \}$ and $\operatorname{ran}(n) := \{x \in \partial E: h_E(x)(nn^*) > 0\}$. It follows from \cite[Proposition~4.7]{Ren2008} that, for $n \in N(\D(E))$, there is a unique homeomorphism $\alpha_n: \operatorname{dom}(n) \to \operatorname{ran}(n)$ such that $h_E(x)(n^*dn) = h_E(\alpha_n(x))(d)h_E(x)(n^*n)$ for all $d \in \D(E)$. 

The extended Weyl groupoid $\G_{(C^*(E),\D(E))}$ is the collection of equivalence classes for an equivalence relation on $\{(n,x):n \in N(\D(E)),\ x \in \operatorname{dom}(n) \}$ with the partially-defined product $[(n_1,x_1)][(n_2,x_2)] := [(n_1 n_2,x_2)]$ if $\alpha_{n_2}(x_2)=x_1$ and the inverse operation $[(n,x)]^{-1} := [(n^*,\alpha_n(x))]$ (see \cite[Proposition~4.7]{BCW}). By \cite[Proposition~4.8]{BCW}, the map $\phi_E:(\mu x,|\mu|-|\nu|,\nu x) \to [(s_\mu s_\nu^*,\nu x)]$ is a groupoid isomorphism between $\G_E$ and $\G_{(C^*(E),\D(E))}$.

\begin{lemma}\label{lemma}
	Let $E$ be a directed graph, $k:E^1\to\mathbb{R}$ a function, and $\eta\in\G_E$.
	\begin{enumerate}
		\item[(a)] There exist $n\in N(\mathcal{D}(E))$ and $x\in\operatorname{dom}(n)$ such that $\phi_E(\eta)=[(n,x)]$ and $\gamma^{E,k}_t(n)=e^{ic_k(\eta)t}n$ for all $t\in\mathbb{R}$.
		\item[(b)] Suppose $r\in\mathbb{R}$ and that there are $n'\in N(\mathcal{D}(E))$ and $x'\in\operatorname{dom}(n')$ such that $\phi_E(\eta) = [(n',x')]$ and $\gamma^{E,k}_t(n')=e^{irt}n'$ for all $t\in\mathbb{R}$. Then $r=c_k(\eta)$.
	\end{enumerate}
\end{lemma}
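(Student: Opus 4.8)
The plan is to dispatch (a) by writing down an explicit normalizer and computing, and to reduce (b), by means of (a), to the statement that a $\gamma^{E,k}$-eigenvector normalizer whose germ is a unit must have eigenvalue $0$. For (a), I would write $\eta=(\mu x,|\mu|-|\nu|,\nu x)$ with $\mu,\nu\in E^*$ and $r(\mu)=r(\nu)=s(x)$, and take $n:=s_\mu s_\nu^*$ together with the point $\nu x$. By \cite[Lemma~4.1]{BCW} we have $n\in N(\mathcal{D}(E))$, by \cite[Proposition~4.8]{BCW} we have $\phi_E(\eta)=[(s_\mu s_\nu^*,\nu x)]=[(n,\nu x)]$, and since $n^*n=s_\nu p_{r(\mu)}s_\nu^*=s_\nu s_\nu^*$ with $h_E(\nu x)(s_\nu s_\nu^*)=1$ (because $\nu x\in Z(\nu)$) we get $\nu x\in\operatorname{dom}(n)$. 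The eigenvalue is then immediate: as $\gamma^{E,k}_t$ is multiplicative and $k$ is additive along paths, $\gamma^{E,k}_t(s_\mu)=e^{ik(\mu)t}s_\mu$ and $\gamma^{E,k}_t(s_\nu^*)=e^{-ik(\nu)t}s_\nu^*$, so $\gamma^{E,k}_t(n)=e^{i(k(\mu)-k(\nu))t}n=e^{ic_k(\eta)t}n$, using $c_k(\eta)=k(\mu)-k(\nu)$. This proves (a).

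For (b) I would first invoke (a) to fix $n=s_\mu s_\nu^*$ with $\phi_E(\eta)=[(n,\nu x)]$ and $\gamma^{E,k}_t(n)=e^{ic_k(\eta)t}n$. From $[(n',x')]=\phi_E(\eta)=[(n,\nu x)]$ I read off $x'=\nu x$, and the groupoid identity $[(n,\nu x)]^{-1}[(n',x')]=[(n,\nu x)]^{-1}[(n,\nu x)]$ shows that $m:=n^*n'$ is a normalizer for which $[(m,\nu x)]$ is the unit at $\nu x$; in particular $\nu x\in\operatorname{dom}(m)$ and the germ of $m$ at $\nu x$ is a unit. Since products of normalizers are normalizers, $m\in N(\mathcal{D}(E))$, and $\gamma^{E,k}_t(m)=\gamma^{E,k}_t(n)^*\gamma^{E,k}_t(n')=e^{i(r-c_k(\eta))t}m$, so $m$ is a $\gamma^{E,k}$-eigenvector with eigenvalue $s:=r-c_k(\eta)$. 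The entire statement is thereby reduced to proving $s=0$.

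To finish I would use the canonical faithful conditional expectation $\Phi\colon C^*(E)\to\mathcal{D}(E)$ (restriction to the unit space under $C^*(E)\cong C^*(\mathcal{G}_E)$). On spanning elements $\Phi(s_\alpha s_\beta^*)$ equals $s_\alpha s_\alpha^*$ if $\alpha=\beta$ and $0$ otherwise, so $\Phi\circ\gamma^{E,k}_t=\Phi$; hence $\Phi(m)=\Phi(\gamma^{E,k}_t(m))=e^{ist}\Phi(m)$ for every $t$, which forces $\Phi(m)=0$ whenever $s\neq0$. On the other hand, a normalizer is supported on an open bisection, so for $y\in\operatorname{dom}(m)$ the value $\Phi(m)(y)$ is the coefficient of $m$ at the unique germ over $y$, which is non-zero precisely when that germ is a unit; as the germ of $m$ at $\nu x$ is a unit and $\nu x\in\operatorname{dom}(m)$, this yields $\Phi(m)(\nu x)\neq0$. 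The contradiction forces $s=0$, that is $r=c_k(\eta)$. I expect the last step to be the only real obstacle: the reductions above are bookkeeping, but relating $\Phi(m)(\nu x)$ to the germ being a unit requires the bisection description of normalizers and the identification of $\Phi$ with evaluation at the unit germ, which is exactly where the Renault--BCW machinery \cite{Ren2008,BCW} is genuinely used.
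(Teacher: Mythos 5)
Part (a) of your proposal is correct and coincides with the paper's argument, and the first two thirds of your part (b) are also sound: passing to $m:=n^*n'$, which is a normaliser, a $\gamma^{E,k}$-eigenvector of eigenvalue $s=r-c_k(\eta)$, and whose class $[(m,\nu x)]$ is the unit at $\nu x$; and using $\Phi\circ\gamma^{E,k}_t=\Phi$ to get $\Phi(m)=0$ whenever $s\neq 0$. This runs parallel to the paper's observation that $\operatorname{supp}'(\pi(n'))\subseteq c_k^{-1}(r)$.

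The final step, however, has a genuine gap. You justify $\Phi(m)(\nu x)\neq 0$ by asserting that a normaliser of $\mathcal{D}(E)$ is supported on an open bisection and that $\Phi(m)(y)$ is nonzero precisely when the germ of $m$ at $y$ is a unit. That is the Renault Cartan picture, and it is available only when $\mathcal{D}(E)$ is maximal abelian, i.e.\ when $E$ satisfies condition (L); the lemma is stated for arbitrary graphs, and the extended Weyl groupoid of \cite{BCW} exists precisely because this picture fails otherwise. Concretely, let $E$ have one vertex and one loop $e$, so $C^*(E)\cong C(\mathbb{T})$, $\mathcal{D}(E)=\mathbb{C}1$, $\partial E=\{x\}$, $\G_E\cong\mathbb{Z}$, and $\Phi(f)=\hat f(0)$. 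Every unitary $u\in C(\mathbb{T})$ is a normaliser with $\operatorname{dom}(u)=\{x\}$; since $\phi_E$ is an isomorphism, $u\mapsto[(u,x)]$ is a group homomorphism $U(C(\mathbb{T}))\to\mathbb{Z}$ sending $z\mapsto 1$, and as the null-homotopic unitaries form a divisible group this homomorphism must be the winding number. Thus $u(e^{i\theta})=e^{2i|\pi-\theta|}$ is a normaliser, not supported on a bisection, whose class $[(u,x)]$ is the unit even though $\Phi(u)=\hat u(0)=0$. So the equivalence you rely on is false. (This $u$ is not an eigenvector with nonzero eigenvalue, so it does not refute the lemma; but your argument never uses the eigenvector property at this step, so it cannot exclude such behaviour, and the statement you would actually need --- that $s\neq 0$ forces the germ not to be a unit --- is essentially the assertion being proved.) To close the gap one must invoke the explicit form of the equivalence relation defining $[(n,x)]$ at isolated eventually periodic points, which is exactly how the paper finishes: it notes that if $r\neq c_k(\eta)$ then the only isotropy element of $\operatorname{supp}'(\pi(s_\nu s_\mu^* n'))$ over $s(\eta)$ would be $(s(\eta),m,s(\eta))$ with $m\neq 0$, and concludes from \cite[Proposition~4.6]{BCW} that $[(n',x')]$ could then not equal $\phi_E(\eta)$.
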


\begin{proof}
	(a): Choose $\mu,\nu\in E^*$ with $r(\mu)=r(\nu)$ such that $\eta\in Z(\mu,\nu)$, and let $n:=s_\mu s_\nu^*$ and $x:=s(\eta)$. Then $x\in\operatorname{dom}(n)$, $\phi_E(\eta)=[(n,x)]$, and $\gamma^{E,k}_t(n)=e^{i(k(\mu)-k(\nu))t}n=e^{ic_k(\eta)t}n$ for all $t\in\mathbb{R}$.
	
	(b): Let $\pi$ denote the isomorphism from $C^*(E)$ to $C^*(\G_E)$ given in \cite[Proposition 2.2]{BCW}. As in \cite{BCW}, we think of $C^*(\G_E)$ as a subset of $C_0(\G_E)$ and define $\operatorname{supp}'(f):=\{\zeta\in\G_E:f(\zeta)\ne 0\}$ for $f\in C_0(\G_E)$. Since $\pi(\gamma^{E,k}_t(s_{\mu} s_{\nu}^*))(\zeta)=e^{ic_k(\zeta)t}\pi(s_{\mu} s_{\nu}^*)(\zeta)$ for $\mu,\nu\in E^*$, $t\in\mathbb{R}$ and $\zeta\in \operatorname{supp}'(\pi(s_\mu s_\nu^*))$, and $\operatorname{span}\{s_\mu s_\nu^*:\mu,\nu\in E^*\}$ is dense in $C^*(E)$, we have $\pi(\gamma^{E,k}_t(n'))(\zeta)=e^{ic_k(\zeta)t}\pi(n')(\zeta)$ for $t\in\mathbb{R}$ and $\zeta\in \operatorname{supp}'(\pi(n'))$. Since $\gamma^{E,k}_t(n')=e^{irt}n'$ for all $t\in\mathbb{R}$, it follows that $\operatorname{supp}'(\pi(n'))\subseteq c_k^{-1}(r)$. It follows that $r=c_k(\eta)$ because if $r\ne c_k(\eta)=k(\mu)-k(\nu)$, then we would have that $\{\eta'\in\operatorname{supp}'(\pi(s_\nu s_\mu^* n')):s(\eta')=r(\eta')=s(\eta)\}=\{(s(\eta),m,s(\eta))\}$ for some $m\in\mathbb{Z}\setminus\{0\}$, from which, together with the definition of the equivalence class $[(n',x')]$ (see \cite[Proposition~4.6]{BCW}), it would follow that $\phi_E(\eta)=[(s_\mu s_\nu^*,s(\eta))] \ne [(n',x')]$.
\end{proof}

\begin{proof}[Proof of Theorem~\ref{thm:1}]

(1) $\implies$ (2): Suppose that $\Phi:\G_E\to\G_F$ is an isomorphism such that $c_l(\Phi(\eta))=c_k(\eta)$ for $\eta\in\G_E$. It then follows from \cite[Proposition 2.2]{BCW} that there is a $*$-isomorphism $\Psi:C^*(E)\to C^*(F)$ satisfying $\Psi(\mathcal{D}(E))=\mathcal{D}(F)$ and $\gamma_t^{F,l}\circ\Psi=\Psi\circ\gamma_t^{E,k}$ for $t\in\mathbb{R}$.

(2) $\implies$ (1): Since $\Psi(\D(E)) = \D(F)$, it follows from \cite[Proposition~4.11]{BCW} that $\Psi$ induces an isomorphism $\psi: \G_{(C^*(E),\D(E))} \to \G_{(C^*(F),\D(F))}$. Define $\Phi := \phi_F^{-1} \circ \psi \circ \phi_E$. Then $\Phi: \G_E \to \G_F$ is a groupoid isomorphism. It remains to check that $\Phi$ is cocyle-preserving. Fix $\eta \in \G_E$. By Lemma~\ref{lemma}(a) there exists $[(n,x)] \in \G_{(C^*(E),\D(E))}$ such that $\phi_E(\eta)=[(n,x)]$ and $\gamma_t^{E,k}(n) = e^{i c_k(\eta) t} n$ for all $t \in \mathbb{R}$. We then have $\phi_F(\Phi(\eta)) = [(\Psi(n),\kappa(x))] \in \G_{(C^*(F),\D(F))}$, where $\kappa$ is a homeomorphism from $\partial E$ onto $\partial F$ (see \cite[Proposition~4.11]{BCW}), and $\gamma_t^{F,l}(\Psi(n)) = e^{i c_k(\eta) t} \Psi(n)$ for all $t \in \mathbb{R}$, so $c_k(\eta)=c_l(\Phi(\eta))$ by Lemma~\ref{lemma}(b).
\end{proof}

Next, we present and prove a ``stabilised version'' of Theorem~\ref{thm:1}. We denote by $\K$ the compact operators on $\ell^2(\N)$, and by $\mathcal{C}$ the maximal abelian subalgebra of $\K$ consisting of diagonal operators.

As in \cite{CRS}, for a directed graph $E$, we denote by $SE$ the graph obtained by attaching a head $\dots e_{3,v} e_{2,v} e_{1,v}$ to every vertex $v \in E^0$ (see \cite[Definition 4.2]{T2}). For a function $k:E^1\to\mathbb{R}$, we let $\bar{k}:SE^1\to\mathbb{R}$ be the function given by $\bar{k}(e)=k(e)$ for $e\in E^1$, and $\bar{k}(e_{i,v})=0$ for $v\in E^0$ and $i=1,2,\dots$.

\begin{theorem}\label{thm:2}
	Let $E$ and $F$ be directed graphs and $k:E^1\to\mathbb{R}$ and $l:F^1\to\mathbb{R}$ functions. The following are equivalent.
	\begin{enumerate}
		\item[(A)] There is an isomorphism $\Phi:\G_{SE} \to\G_{SF}$ satisfying $c_{\bar{l}}(\Phi(\eta))=c_{\bar{k}}(\eta)$ for $\eta\in\G_{SE}$.
		\item[(B)] There is a $*$-isomorphism $\Psi:C^*(E)\otimes\mathcal{K}\to C^*(F)\otimes\mathcal{K}$ satisfying $\Psi(\mathcal{D}(E)\otimes\mathcal{C})=\mathcal{D}(F)\otimes\mathcal{C}$ and $(\gamma_t^{F,l} \otimes\id_\mathcal{K})\circ\Psi=\Psi\circ(\gamma_t^{E,k}\otimes\id_\mathcal{K})$ for $t\in\mathbb{R}$. 
	\end{enumerate}
\end{theorem}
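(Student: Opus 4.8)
The plan is to reduce Theorem~\ref{thm:2} to Theorem~\ref{thm:1} applied to the graphs $SE$ and $SF$ with the functions $\bar{k}$ and $\bar{l}$, and then to transport the resulting statement about $C^*(SE)$ and $C^*(SF)$ across a stabilisation isomorphism. The first observation is that condition (A) is verbatim condition (1) of Theorem~\ref{thm:1} for the data $(SE,SF,\bar{k},\bar{l})$: an isomorphism $\Phi:\G_{SE}\to\G_{SF}$ with $c_{\bar{l}}(\Phi(\eta))=c_{\bar{k}}(\eta)$. Hence Theorem~\ref{thm:1} immediately tells us that (A) is equivalent to the existence of a $*$-isomorphism $\Psi':C^*(SE)\to C^*(SF)$ with $\Psi'(\D(SE))=\D(SF)$ and $\gamma^{SF,\bar{l}}_t\circ\Psi'=\Psi'\circ\gamma^{SE,\bar{k}}_t$ for all $t\in\mathbb{R}$. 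So everything comes down to identifying this intermediate condition with (B).

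The bridge I would build is a $*$-isomorphism $\Theta_E:C^*(SE)\to C^*(E)\otimes\K$ with three properties: (i) it is diagonal-preserving, $\Theta_E(\D(SE))=\D(E)\otimes\mathcal{C}$; (ii) it is equivariant, $\Theta_E\circ\gamma^{SE,\bar{k}}_t=(\gamma^{E,k}_t\otimes\id_\K)\circ\Theta_E$; and (iii) the same construction applies to $F$, giving $\Theta_F$. Concretely I would manufacture $\Theta_E$ from the heads. For $v\in E^0$ write $\mu_i^v:=e_{i,v}e_{i-1,v}\cdots e_{1,v}$ for the length-$i$ head path terminating at $v$ (with $\mu_0^v=v$), and set $E_{ij}:=\sum_{v\in E^0}s_{\mu_i^v}s_{\mu_j^v}^*$, a strictly convergent sum in the multiplier algebra. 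Since distinct heads are disjoint and each head vertex is regular with a single outgoing edge, a short computation gives $s_{\mu_i^v}^*s_{\mu_j^v}=\delta_{ij}p_v$, so $\{E_{ij}\}_{i,j\in\N}$ is a system of matrix units with $\sum_i E_{ii}$ converging strictly to $1$ and corner $E_{00}C^*(SE)E_{00}=C^*(E)$ (the canonical copy generated by $\{p_v,s_e:v\in E^0,e\in E^1\}$, which embeds because adding incoming head edges does not alter the out-edges of any vertex). The standard full-corner construction then yields $\Theta_E(x)=\sum_{i,j}\iota^{-1}(E_{0i}xE_{j0})\otimes e_{ij}$, where $\iota$ identifies $C^*(E)$ with the corner. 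Properties (i) and (iii) are essentially the content of \cite{CRS} and \cite{T2}; what I need to add is that, because $\bar{k}$ vanishes on every head edge, each $E_{ij}$ is fixed by $\gamma^{SE,\bar{k}}_t$ while $\gamma^{SE,\bar{k}}_t$ restricts to $\gamma^{E,k}_t$ on the corner, whence (ii) follows directly from the displayed formula for $\Theta_E$.

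With $\Theta_E$ and $\Theta_F$ in hand, the equivalence of the intermediate condition with (B) is a routine transport. Given $\Psi'$ as above, I would set $\Psi:=\Theta_F\circ\Psi'\circ\Theta_E^{-1}$; properties (i) and (ii) of $\Theta_E$ and $\Theta_F$ convert the diagonal-preservation and equivariance of $\Psi'$ into those of $\Psi$, giving (B). Conversely, given $\Psi$ as in (B), the map $\Psi':=\Theta_F^{-1}\circ\Psi\circ\Theta_E$ satisfies the intermediate condition, hence (A) by Theorem~\ref{thm:1}.

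I expect the main obstacle to be establishing the equivariance (ii) of the stabilisation isomorphism with the correct bookkeeping, rather than its mere existence: the diagonal-preserving isomorphism $C^*(SE)\cong C^*(E)\otimes\K$ is known, but one must verify that the matrix units coming from the heads are genuinely $\gamma^{SE,\bar{k}}$-invariant and that the restriction of $\gamma^{SE,\bar{k}}$ to the $(0,0)$-corner is exactly $\gamma^{E,k}$. This is where the definition of $\bar{k}$ as zero on the head edges does all the work, and it is precisely the reason the factor $\K$ appears with the trivial action $\id_\K$ in (B).
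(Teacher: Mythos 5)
Your proposal is correct and follows essentially the same route as the paper: both reduce to Theorem~\ref{thm:1} applied to $SE$, $SF$ with $\bar{k}$, $\bar{l}$, and then conjugate by a diagonal-preserving, gauge-equivariant stabilisation isomorphism between $C^*(SE)$ and $C^*(E)\otimes\K$ built from the head paths $\mu_{i,v}$. The paper simply writes this isomorphism in the opposite direction, as $\rho(p_v\otimes\theta_{i,j})=s_{\mu_{i,v}}s_{\mu_{j,v}}^*$ and $\rho(s_e\otimes\theta_{i,j})=s_{\mu_{i,s(e)}}s_e s_{\mu_{j,r(e)}}^*$, with equivariance following exactly as you observe from $\bar{k}$ vanishing on the head edges.
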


\begin{proof}
	By \cite[Theorem~4.2]{T2}, there is an isomorphism $\rho: C^*(E) \otimes \K \to C^*(SE)$. Let $\{\theta_{i,j}\}_{i,j\in\mathbb{N}}$ be the canonical generators of $\mathcal{K}$. For $v\in E^0$ and $i\in\mathbb{N}$ denote by $\mu_{i,v}$ the path $e_{i,v}e_{i-1,v}\dots e_{1,v}$ in $SE$ (if $i=0$, then we let $\mu_{i,v}=v$). One can check that the isomorphism $\rho: C^*(E) \otimes \K \to C^*(SE)$ can be chosen such that $\rho(p_v \otimes \theta_{i,j}) = s_{\mu_{i,v}} s_{\mu_{j,v}}^*$ for $v \in E^0$ and $\rho(s_e \otimes \theta_{i,j}) = s_{\mu_{i,s(e)}} s_e s_{\mu_{j,r(e)}}^*$ for $e \in E^1$. Routine calculations then show that $\rho(\mathcal{D}(E) \otimes \mathcal{C}) = \mathcal{D}(SE)$ and that $\rho \circ (\gamma_t^{E,k} \otimes \id_\K) = \gamma^{SE,\bar{k}}_t\circ\rho$ for $t \in \RR$. The equivalence (A) $\iff$ (B) now follows from the equivalence (1) $\iff$ (2) of Theorem~\ref{thm:1} applied to the graphs $SE$ and $SF$ and the functions $\bar{k}$ and $\bar{l}$.
\end{proof}

\section{Eventual conjugacy of graphs}

Let $E$ and $F$ be directed graphs. Following \cite{MCon}, we say that $E$ and $F$ are \emph{eventually conjugate} if there exists a homeomorphism $h:\partial E\to\partial F$ and continuous maps $k:\partial E^{\ge 1}\to\mathbb{N}$ and $k':\partial F^{\ge 1}\to\mathbb{N}$ such that $\sigma_F^{k(x)}(h(\sigma_E(x)))=\sigma_F^{k(x)+1}(h(x))$ for all $x\in \partial E^{\ge 1}$ and $\sigma_E^{k'(y)}(h^{-1}(\sigma_F(y)))=\sigma_E^{k'(y)+1}(h^{-1}(y))$ for all $y\in \partial F^{\ge 1}$. We call such a homeomorphism $h: \partial E \to \partial F$ an \emph{eventual conjugacy}.

Notice that if $h:\partial E\to\partial F$ is a conjugacy in the sense that $\sigma_F(h(x))=h(\sigma_E(x))$ for all $x\in \partial E^{\ge 1}$, then $h$ is an eventual conjugacy (in this case we can take $k$ and $k'$ to be constantly equal to 0).

\begin{theorem}\label{thm:3}
	Let $E$ and $F$ be directed graphs. The following are equivalent.
	\begin{enumerate}
		\item[(i)] $E$ and $F$ are eventually conjugate.
		\item[(ii)] There is an isomorphism $\Phi:\G_E\to\G_F$ satisfying $c_F(\Phi(\eta))=c_E(\eta)$ for $\eta\in\G_E$.
		\item[(iii)] There is a $*$-isomorphism $\Psi:C^*(E)\to C^*(F)$ satisfying $\Psi(\mathcal{D}(E))=\mathcal{D}(F)$ and $\lambda_z^{F}\circ\Psi=\Psi\circ\lambda_z^{E}$ for $z\in\mathbb{T}$.
	\end{enumerate}
\end{theorem}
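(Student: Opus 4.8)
The plan is to prove the three-way equivalence by handling the two halves separately, using Theorem~\ref{thm:1} for (ii)$\iff$(iii) and a direct groupoid/shift-space argument for (i)$\iff$(ii).

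For (ii)$\iff$(iii) I would simply specialise Theorem~\ref{thm:1} to the constant weights $k\equiv 1$ on $E^1$ and $l\equiv 1$ on $F^1$. With these choices $c_k=c_E$ and $c_l=c_F$, since $c_k((\mu x,|\mu|-|\nu|,\nu x))=|\mu|-|\nu|$, so condition (1) of Theorem~\ref{thm:1} becomes exactly (ii). Likewise $\gamma^{E,k}_t=\lambda^E_{e^{it}}$ and $\gamma^{F,l}_t=\lambda^F_{e^{it}}$, and since $t\mapsto e^{it}$ maps $\RR$ onto $\mathbb{T}$, the intertwining $\gamma^{F,l}_t\circ\Psi=\Psi\circ\gamma^{E,k}_t$ for all $t\in\RR$ is equivalent to $\lambda^F_z\circ\Psi=\Psi\circ\lambda^E_z$ for all $z\in\mathbb{T}$; thus condition (2) becomes exactly (iii), and (ii)$\iff$(iii) is immediate.

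The substance is (i)$\iff$(ii). For (i)$\implies$(ii), starting from an eventual conjugacy $h$ with continuous (hence locally constant, as $\N$ is discrete) maps $k,k'$, I would define $\Phi(x,m-n,y):=(h(x),m-n,h(y))$. The cocycle is then visibly preserved, and $\Phi$ is a groupoid homomorphism because $h$ is a bijection and the middle coordinates add; its inverse is the analogous map built from $h^{-1}$ and $k'$. The one genuine point is that $\Phi$ lands in $\G_F$. For this I would first upgrade the single-step identity defining eventual conjugacy to an $m$-step version: by induction on $m$ there is a locally constant $l_m:\partial E^{\ge m}\to\N$ with $\sigma_F^{l_m(x)+m}(h(x))=\sigma_F^{l_m(x)}(h(\sigma_E^m(x)))$, the inductive step combining the hypothesis with the single-step identity applied to $\sigma_E^m(x)$ and setting $l_{m+1}(x)=l_m(x)+k(\sigma_E^m(x))$. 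Given $(x,m-n,y)\in\G_E$, write $z=\sigma_E^m(x)=\sigma_E^n(y)$ and choose a common shift $M$ so that $\sigma_F^{M+m}(h(x))=\sigma_F^M(h(z))=\sigma_F^{M+n}(h(y))$; this puts $(h(x),m-n,h(y))$ in $\G_F$. Continuity of $\Phi$ and $\Phi^{-1}$ then follows because on each basic set $Z(U,m,n,V)$ the witnessing shifts are locally constant and $h$ is a homeomorphism.

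For (ii)$\implies$(i), I would let $h:\partial E\to\partial F$ be the homeomorphism of unit spaces induced by $\Phi$ (via $x\mapsto(x,0,x)$). For $x\in\partial E^{\ge 1}$ apply $\Phi$ to $\eta_x:=(x,1,\sigma_E(x))$: since $\Phi$ preserves the cocycle and intertwines the range and source maps with $h$, the image has cocycle $1$, range $h(x)$ and source $h(\sigma_E(x))$, so there are $p,q\in\N$ with $p-q=1$ and $\sigma_F^p(h(x))=\sigma_F^q(h(\sigma_E(x)))$; setting $k(x):=q$ gives the eventual-conjugacy identity. Continuity of $x\mapsto\eta_x$ together with continuity of $\Phi$ means that near each point $\Phi(\eta_x)$ lies in a single basic bisection $Z(U,p,q,V)$ with $p,q$ fixed, so $k$ is locally constant, hence continuous; the symmetric argument applied to $\Phi^{-1}$ produces $k'$. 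I expect the main obstacle to be exactly this continuity bookkeeping — in particular establishing the iterated $m$-step identity and arguing local constancy of the exponents $k,k',l_m$ from $\N$-valued continuity and the $Z(U,m,n,V)$ description of the groupoid topology; the algebraic parts (that $\Phi$ respects the groupoid operations and the cocycle) are routine.
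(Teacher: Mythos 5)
Your proposal is correct and follows essentially the same route as the paper: (ii)$\iff$(iii) by specialising Theorem~\ref{thm:1} to the constant weight $1$, the map $(x,n,y)\mapsto(h(x),n,h(y))$ for (i)$\implies$(ii) (where you supply the iterated $m$-step identity that the paper leaves implicit), and for (ii)$\implies$(i) the induced unit-space homeomorphism together with the decomposition of $\Phi$-images into bisections $Z(U,p+1,p,V)$ to produce a continuous $k$. The only cosmetic difference is that the paper makes your ``locally constant'' bookkeeping global by using compactness of each $Z(e,r(e))$ to write $\Phi(Z(e,r(e)))$ as a finite disjoint union of such bisections, which gives the well-defined continuous $k_e$ directly.
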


\begin{proof}
	(ii) $\iff$ (iii): Let $k:E^1\to\mathbb{R}$ and $l:F^1\to\mathbb{R}$ both be constantly equal to 1. Then $c_k=c_E$, $c_l=c_F$, and $\gamma^{E,k}_{t}=\lambda^E_{e^{it}}$ and $\gamma^{F,l}_{t}=\lambda^F_{e^{it}}$ for all $t\in\mathbb{R}$. It therefore follows from Theorem~\ref{thm:1} that (ii) and (iii) are equivalent.
	
	(i) $\implies$ (ii): Suppose $h:\partial E\to\partial F$ is an eventual conjugacy. Then the map $(x,n,y)\mapsto (h(x),n,h(y))$ is an isomorphism $\Phi:\G_E\to\G_F$ satisfying $c_F(\Phi(\eta))=c_E(\eta)$ for $\eta\in\G_E$.
	
	(ii) $\implies$ (i): Suppose $\Phi:\G_E\to\G_F$ is an isomorphism such that $c_F(\Phi(\eta))=c_E(\eta)$ for $\eta\in\G_E$. Then the restriction of $\Phi$ to $\G_E^0$ is a homeomorphism onto $\G_F^0$. Since the map $x\mapsto (x,0,x)$ is a homeomorphism from $\partial E$ onto $\G_E^0$, and $y\mapsto (y,0,y)$ is a homeomorphism from $\partial F$ onto $\G_F^0$, it follows that there is a a homeomorphism $h:\partial E\to\partial F$ such that $\Phi((x,0,x))=(h(x),0,h(x))$ for all $x\in\partial E$. Since $c_F(\Phi(\eta))=c_E(\eta)$ for all $\eta\in\G_E$, it follows that $\Phi((x,n,y))=(h(x),n,h(y))$ for all $(x,n,y)\in\G_E$. Let $e\in E^1$. Then $\Phi(Z(e,r(e)))$ is an open and compact subset of $c_F^{-1}(1)$. It follows that there exist an $n$, mutually disjoint open subsets $U_1,\dots,U_n$ of $\partial F$, mutually disjoint open subsets $V_1,\dots,V_n$ of $\partial F$, and $k_1,\dots,k_n\in\mathbb{N}$ such that $\Phi(Z(e,r(e)))=\bigcup_{i=1}^n Z(U_i,k_i+1,k_i,V_i)$. Define $k_e:Z(e)\to\mathbb{N}$ by $k_e(x)=k_i$ for $x\in h^{-1}(U_i)$. Then $k_e$ is continuous and $\sigma_F^{k_e(x)}(h(\sigma_E(x)))=\sigma_F^{k_e(x)+1}(h(x))$ for $x\in Z(e)$. By doing this for each $e\in E^1$, we get a continuous map $k:\partial E^{\ge 1}\to\mathbb{N}$ such that $\sigma_F^{k(x)}(h(\sigma_E(x)))=\sigma_F^{k(x)+1}(h(x))$ for all $x\in \partial E^{\ge 1}$.
	
	A continuous map $k':\partial F^{\ge 1}\to\mathbb{N}$ such that $\sigma_E^{k'(y)}(h^{-1}(\sigma_F(y)))=\sigma_E^{k'(y)+1}(h^{-1}(y))$ for all $y\in \partial F^{\ge 1}$ can be constructed in a similar way. Thus, $h$ is an eventual conjugacy.
\end{proof}

Let $A$ be a finite square $\{0,1\}$-matrix, and assume that every row and every column of $A$ is nonzero. As in \cite{CK}, we denote by $\Oo_A$ the Cuntz--Krieger algebra of $A$ with gauge action $\lambda^A$ and canonical abelian subalgebra $\D_A$, and by $(X_A,\sigma_A)$ the one-sided subshift of $A$ (if $A$ does not satisfy condition (I), then we let $\Oo_A$ denote the universal Cuntz--Krieger algebra $\mathcal{A}\Oo_A$ introduced in \cite{aHR}). As in \cite{MCon}, we say that $(X_A,\sigma_A)$ and $(X_B,\sigma_B)$ are eventually one-sided conjugate if there is a homeomorphism $h:X_A\to X_B$ and continuous maps $k:X_A\to\mathbb{N}$ and $k':X_B\to\mathbb{N}$ such that $\sigma_B^{k(x)}(h(\sigma_A(x)))=\sigma_B^{k(x)+1}(h(x))$ for all $x\in X_A$, and $\sigma_A^{k'(y)}(h^{-1}(\sigma_B(y)))=\sigma_A^{k'(y)+1}(h^{-1}(y))$ for all $y\in X_B$.

We obtain from Theorem~\ref{thm:3} the following corollary which was proved in the irreducible case by Kengo Matsumoto in \cite[Theorem 1.2]{MCon}, and which can be seen as a kind of a converse to \cite[Proposition 2.17]{CK}.

\begin{corollary}\label{cor:2}
	Let $A$ and $B$ be finite square $\{0,1\}$-matrices, and assume that every row and every column of $A$ and $B$ is nonzero. There is a $*$-isomorphism $\Psi:\Oo_A\to\Oo_B$ satisfying $\Psi(\D_A)=\D_B$ and $\lambda^B_z\circ\Psi=\Psi\circ\lambda^A_z$ for all $z\in\mathbb{T}$ if and only if $(X_A,\sigma_A)$ and $(X_B,\sigma_B)$ are eventually one-sided conjugate.
\end{corollary}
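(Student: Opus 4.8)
The plan is to deduce the corollary from the equivalence (i) $\iff$ (iii) of Theorem~\ref{thm:3} by realising the Cuntz--Krieger data of $A$ and $B$ as graph data. First I would associate to $A$ the directed graph $E_A$ with vertex set $E_A^0 = \{1, \dots, N\}$ (where $N$ is the size of $A$) and edge set $E_A^1 = \{(i,j) : A(i,j) = 1\}$, with $s(i,j) = i$ and $r(i,j) = j$, and likewise define $E_B$ from $B$. Since every row and every column of $A$ is nonzero, every vertex of $E_A$ emits and receives at least one edge, so $E_A$ has no sinks and no sources; as $E_A$ is finite this forces $E_A^0 = (E_A^0)_{\reg}$, hence $(E_A^0)_{\sing} = \emptyset$ and $\partial E_A = E_A^\infty = \partial E_A^{\ge 1}$, with $\sigma_{E_A}$ a globally defined surjective local homeomorphism.

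Next I would record the two standard identifications. On the $C^*$-level, the assignment $s_i \mapsto \sum_{j : A(i,j) = 1} s_{(i,j)}$ extends to a $*$-isomorphism $\Oo_A \to C^*(E_A)$: setting $t_i := \sum_{j} s_{(i,j)}$ one checks that $t_i^* t_i = \sum_j A(i,j) p_j$ and, using that the range projections $s_{(j,k)} s_{(j,k)}^*$ are mutually orthogonal together with (CK3), that $t_j t_j^* = p_j$, so that the Cuntz--Krieger relations $t_i^* t_i = \sum_j A(i,j) t_j t_j^*$ hold; the universal property (with the convention $\Oo_A = \mathcal{A}\Oo_A$ when $A$ fails condition (I)) then supplies the isomorphism. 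This isomorphism carries $\D_A$ onto $\D(E_A)$, and since each $t_i$ has gauge-degree $1$ it intertwines $\lambda^A$ with $\lambda^{E_A}$. On the dynamical level, the map sending the infinite path $(v_1,v_2)(v_2,v_3)(v_3,v_4)\cdots \in E_A^\infty$ to the point $(v_1,v_2,v_3,\dots) \in X_A$ is a homeomorphism $\partial E_A \to X_A$ that conjugates $\sigma_{E_A}$ to $\sigma_A$; the analogous statements hold for $B$.

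With these identifications in hand, the two sides of the corollary translate directly into the corresponding statements of Theorem~\ref{thm:3} for $E_A$ and $E_B$: a diagonal-preserving, gauge-invariant $*$-isomorphism $\Oo_A \to \Oo_B$ corresponds exactly to one $C^*(E_A) \to C^*(E_B)$ as in condition (iii), and since $\partial E_A = \partial E_A^{\ge 1} \cong X_A$ and $\partial E_B = \partial E_B^{\ge 1} \cong X_B$ with shifts conjugated, the defining conditions of eventual one-sided conjugacy of $(X_A,\sigma_A)$ and $(X_B,\sigma_B)$ become verbatim the defining conditions of eventual conjugacy of $E_A$ and $E_B$ in condition (i). Invoking the equivalence (i) $\iff$ (iii) of Theorem~\ref{thm:3} then finishes the proof. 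I expect the only genuine work to lie in the bookkeeping of the second paragraph — confirming that the standard isomorphism $\Oo_A \cong C^*(E_A)$ really does match $\D_A$ with $\D(E_A)$ and $\lambda^A$ with $\lambda^{E_A}$, and that the boundary path space of $E_A$ (not merely $E_A^\infty$) is exhausted by infinite paths so that the domains of the maps $k,k'$ in the two notions coincide; both points rest on the no-sinks/no-sources hypothesis coming from the nonzero rows and columns of $A$ and $B$.
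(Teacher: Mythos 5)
Your proposal is correct and follows essentially the same route as the paper: pass to the vertex-shift graph $E_A$ of $A$, identify $\Oo_A\cong C^*(E_A)$ compatibly with the diagonals and gauge actions, identify $(X_A,\sigma_A)$ with $(E_A^\infty,\sigma_{E_A})=(\partial E_A,\sigma_{E_A})$, and invoke the equivalence (i) $\iff$ (iii) of Theorem~\ref{thm:3}. The only difference is that you spell out the ``well-known'' isomorphism $\Oo_A\to C^*(E_A)$ explicitly, which the paper simply cites.
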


\begin{proof}
	Let $E_A$ be the graph of $A$, i.e., $E_A^0$ is the index set of $A$, $E_A^1=\{(i,j)\in E_A^0\times E_A^0: A(i,j)=1\}$, and $r((i,j))=j$ and $s((i,j))=i$ for $(i,j)\in E_A^1$. Then $\partial E_A=E_A^\infty$, and there is a homeomorphism from $X_A$ to $E_A^\infty$ that intertwines $\sigma_A$ and $\sigma_{E_A}$. It follows that $(X_A,\sigma_A)$ and $(X_B,\sigma_B)$ are eventually one-sided conjugate if and only if $E_A$ and $E_B$ are eventually one-sided conjugate. 
	
	It is well-known that there is a $*$-isomorphism $\Psi:\Oo_A\to C^*(E_A)$ satisfying $\Psi(\D_A)=\D(E_A)$ and $\lambda^{E_A}_z\circ\Psi=\Psi\circ\lambda^A_z$ for all $z\in\mathbb{T}$. The corollary therefore follows from the equivalence of (i) and (iii) in Theorem~\ref{thm:3}.
\end{proof}

\begin{remark} Kengo Matsumoto has strengthened \cite[Theorem 1.2]{MCon} in \cite{MUni} and shown that if the matrices $A$ and $B$ in Corollary \ref{cor:2} are irreducible and not permutation matrices, then the two conditions in Corollary \ref{cor:2} are equivalent to several other interesting conditions, for example to the condition that there is a $*$-isomorphism $\Psi:\Oo_A\to\Oo_B$ satisfying $\Psi(\D_A)=\D_B$ and $\Psi(\mathcal{F}_A)=\mathcal{F}_B$, where $\mathcal{F}_A$ is the fixed point algebra of $\lambda^A$ and $\mathcal{F}_B$ is the fixed point algebra of $\lambda^B$. \end{remark}

\section{Conjugacy of two-sided shifts of finite type}

For a finite directed graph $E$ with no sinks or sources, we define $\TSS_E$ to be the two-sided edge shift 
$$\TSS_E := \{ (x_n)_{n\in\Z}: x_n \in E^1\text{ and } r(x_n) = s(x_{n+1})\text{ for all }n\in\mathbb{Z}\}$$
equipped with the induced topology of the product topology of $(E^1)^{\mathbb{Z}}$ (where each copy of $E^1$ is given the discrete topology), and let $\tss_E:\TSS_E\to\TSS_E$ be the homeomorphism given by $(\tss_E(x))_m=x_{m+1}$ for $x=(x_n)_{n\in\Z}\in\TSS_E$. 

If $E$ and $F$ are finite directed graphs with no sinks or sources, then a \emph{conjugacy} from $\TSS_E$ to $\TSS_F$ is a homeomorphism $\phi:\TSS_E\to\TSS_F$ such that $\tss_F\circ\phi=\phi\circ\tss_E$. The shift spaces $\TSS_E$ and $\TSS_F$ are said to be \emph{conjugate} if there is a conjugacy from $\TSS_E$ to $\TSS_F$. 

Recall that for a directed graph $E$, we denote by $SE$ the graph obtained by attaching a head $\dots e_{3,v} e_{2,v} e_{1,v}$ to every vertex $v \in E^0$ (see \cite[Definition 4.2]{T2}). Define a function $k_E: (SE)^1 \to \RR$ by $k_E(e) = 1$ for $e \in E^1$ and $k_E(e_{i,v}) = 0$ for $v \in E^0$ and $i=1,2,\dots$. 

\begin{theorem}\label{thm:4}
	Let $E$ and $F$ be directed graphs. The following two conditions are equivalent.
	\begin{enumerate}
		\item[(I)] There is an isomorphism $\Phi:\G_{SE} \to\G_{SF}$ satisfying $c_{k_F}(\Phi(\eta))=c_{k_E}(\eta)$ for $\eta\in\G_{SE}$.
		\item[(II)] There is a $*$-isomorphism $\Psi:C^*(E)\otimes\mathcal{K}\to C^*(F)\otimes\mathcal{K}$ satisfying $ \Psi(\mathcal{D}(E)\otimes\mathcal{C})=\mathcal{D}(F)\otimes\mathcal{C}$ and $(\lambda_z^F \otimes\id_\mathcal{K})\circ\Psi=\Psi\circ(\lambda^E_z\otimes\id_\mathcal{K})$ for $z\in\mathbb{T}$.
	\end{enumerate}
	If $E$ and $F$ are finite graphs with no sinks or sources, then (I) and (II) are equivalent to the following condition.
	\begin{enumerate}
		\item[(III)] The two-sided edge shifts $\TSS_E$ and $\TSS_F$ are conjugate.
	\end{enumerate}
\end{theorem}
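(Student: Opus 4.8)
The plan is to obtain the equivalence (I)$\iff$(II) directly from Theorem~\ref{thm:2} (valid for arbitrary directed graphs), and then to handle the equivalence with (III) separately under the standing hypothesis that $E$ and $F$ are finite with no sinks or sources. For (I)$\iff$(II) I would apply Theorem~\ref{thm:2} to the constant functions $k\equiv 1$ on $E^1$ and $l\equiv 1$ on $F^1$. By the definition of $\bar{k}$ we then have $\bar{k}=k_E$ and $\bar{l}=k_F$, so condition (A) of Theorem~\ref{thm:2} is \emph{verbatim} condition (I). Moreover $\gamma^{E,k}_t=\lambda^E_{e^{it}}$ and $\gamma^{F,l}_t=\lambda^F_{e^{it}}$, and since $\{e^{it}:t\in\RR\}=\mathbb{T}$, the intertwining relation in condition (B) reads $(\lambda^F_z\otimes\id_\K)\circ\Psi=\Psi\circ(\lambda^E_z\otimes\id_\K)$ for all $z\in\mathbb{T}$, which is exactly condition (II). Thus (I)$\iff$(II) is immediate, and no finiteness hypothesis is needed here.

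The substantive part is the equivalence of (I) with (III). First I would record a convenient coordinatisation of the stabilised system: since $E$ is finite with no sinks, every vertex of $SE$ is regular, so $\partial(SE)=(SE)^\infty$, and each boundary path is uniquely of the form $\mu_{n,v}\,\xi$ with $\xi\in E^\infty$ and $v=s(\xi)$. This identifies $\partial(SE)$ with $\bigsqcup_{n\in\N}\{n\}\times E^\infty$, under which $\sigma_{SE}$ decreases the head-depth $n$ while $n\ge 1$ and acts as the one-sided edge shift $\sigma_E$ once $n=0$. The role of the weight $k_E$ is that $c_{k_E}$ counts only genuine $E$-edges: it records the ``number of applications of $\sigma_E$'', that is, the suspension/return time of the two-sided shift, and is blind to the head moves that the stabilisation uses to change levels. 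For (III)$\implies$(I) I would begin from a conjugacy $\phi\colon\TSS_E\to\TSS_F$; since $\phi$ commutes with $\tss_E,\tss_F$ it preserves the number of shifts and hence the time recorded by $c_{k_E}$. Because the heads carry no $E$-content, $\phi$ cannot be applied to the head coordinates directly, so I would instead realise this direction through (II): a conjugacy of two-sided edge shifts yields a diagonal-preserving gauge-equivariant isomorphism of the stabilised algebras, which is the classical construction of Cuntz and Krieger (\cite[Theorem~3.8]{CK}, \cite[Theorem~2.3]{C2}) recast in the present groupoid language, and then (II)$\implies$(I) closes the loop.

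The converse (I)$\implies$(III) is the main obstacle, and the crux is that a point of $\TSS_E$ carries bi-infinite data, whereas a point of $\partial(SE)$ remembers its backward history only through the integer head-depth. My plan is to reconstruct the missing backward coordinates from the groupoid by way of the inverse-limit description $\TSS_E=\varprojlim(E^\infty\xleftarrow{\sigma_E}E^\infty\xleftarrow{\sigma_E}\cdots)$: the subgroupoid $c_{k_E}^{-1}(0)$ together with the head-levels encodes the projective tower of preimages (the ``past''), while the weight-one elements $((n,\xi),1,(n,\sigma_E\xi))$ encode the forward shift. A cocycle-preserving isomorphism $\Phi$ respects this splitting, so—after restricting $\Phi$ to the unit spaces as in the proof of Theorem~\ref{thm:3}—I would transport both the towers and the forward generators simultaneously to produce a shift-commuting homeomorphism $\TSS_E\to\TSS_F$. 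An alternative route is to invoke the identification of \emph{cocycle-free} isomorphism $\G_{SE}\cong\G_{SF}$ with flow equivalence of $\TSS_E$ and $\TSS_F$ (the stabilised form of \cite[Theorem~2.3]{MM}, cf.\ \cite[Corollary~3.8]{MM} and \cite{AER,BCW}) and then to show that preservation of the time-cocycle $c_{k_E}$ upgrades a flow equivalence to a genuine conjugacy, since matching return-time functions is precisely what distinguishes conjugacy from flow equivalence. In either approach the hard point is the same: to prove that $c_{k_E}$ faithfully records the bi-infinite shift dynamics of $\TSS_E$. Once this is established, the chain (III)$\iff$(I)$\iff$(II) is complete.
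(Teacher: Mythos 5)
Your reduction of (I)$\iff$(II) to Theorem~\ref{thm:2} with $k\equiv 1$, $l\equiv 1$ is exactly what the paper does, and that part is fine. The two directions involving (III), however, both have genuine gaps. For (III)$\implies$(I) you propose to route through the classical Cuntz--Krieger construction (\cite[Theorem~3.8]{CK}, \cite[Theorem~2.3]{C2}), but those results carry hypotheses that Theorem~\ref{thm:4} does not: condition (I) on the matrices and their transposes (and, for \cite{CK}, irreducibility and aperiodicity). A finite graph with no sinks or sources can easily fail these (a single cycle, for instance), so the appeal does not cover all cases of the theorem; indeed, Corollary~\ref{cor:3} is only able to drop the condition (I) hypotheses \emph{because} Theorem~\ref{thm:4} is proved independently of \cite{C2}. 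The actual content of this direction, which your plan sidesteps, is the following: after recoding, a conjugacy $\phi:\TSS_E\to\TSS_F$ induces only a shift-commuting, boundedly finite-to-one, generally non-injective surjection $\pi:E^\infty\to F^\infty$ on the one-sided level, and one must use the $\N$-coordinate of the stabilisation to repair the failure of injectivity --- concretely, by partitioning $\N$ into sets indexed by the $\sim$-classes of initial paths of a fixed length $L$ and choosing bijections $f_\mu:A_\mu\to\N$, so that $(x,n)\mapsto(\pi(x),f^{-1}_{x_{[0,L)}}(n))$ becomes a homeomorphism $(SE)^\infty\to(SF)^\infty$; the cocycle-preserving groupoid isomorphism is then written down explicitly. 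Your observation that ``the heads carry no $E$-content'' is where the work starts, not where it ends.

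For (I)$\implies$(III) your proposal is a plan rather than a proof, and you say so yourself (``the hard point is the same: to prove that $c_{k_E}$ faithfully records the bi-infinite shift dynamics''). The paper's argument does the following, none of which appears in your sketch: restrict $\Phi$ to unit spaces to get $\psi:E^\infty\to F^\infty$ from $(x,0)\mapsto(\psi(x),m)$; use the cocycle condition on $\Phi((x,0),1,(\sigma_E(x),0))$ to produce a pointwise delay $l(x)$ with $\sigma_F^{l(x)+1}(\psi(x))=\sigma_F^{l(x)}(\psi(\sigma_E(x)))$; prove $l$ is continuous and use compactness of $E^\infty$ to get a uniform $L$, so that $\varphi:=\sigma_F^L\circ\psi$ intertwines the one-sided shifts; extend $\varphi$ coordinate-wise to $\overline\varphi:\TSS_E\to\TSS_F$; and finally prove injectivity and surjectivity of $\overline\varphi$ by pulling relations back through $\Phi^{-1}$ and extracting two further uniform bounds $K$ and $H$ by the same continuity-plus-compactness device. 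The inverse-limit picture you describe is a reasonable heuristic for why this should work, but the uniformisation steps (which are exactly where the cocycle hypothesis and compactness are consumed) are the proof, and they are missing. The alternative route via upgrading a flow equivalence to a conjugacy is likewise only a slogan; no mechanism is given for converting ``matching return times'' into a shift-commuting homeomorphism of the two-sided shifts.
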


\begin{proof}
	An argument similar to the one used in the proof of Theorem~\ref{thm:3} shows that the equivalence of (I) and (II) follows by applying Theorem~\ref{thm:2} to the functions $k:E^1\to\mathbb{R}$ and $l:F^1\to\mathbb{R}$ that are constanly equal to 1.

It remains to establish (I) $\iff$ (III). Suppose that $E$ and $F$ are finite graphs with no sinks or sources. Then $\partial E=E^\infty$ and $\partial F=F^\infty$. As in the proof of Theorem~\ref{thm:2}, for $v\in E^0$ and $i\in\mathbb{N}$, we denote by $\mu_{i,v}$ the path $e_{i,v}e_{i-1,v}\dots e_{1,v}$ in $SE$ (if $i=0$, then we let $\mu_{i,v}=v$). In the proof of \cite[Lemma~4.1]{CRS}, it was shown that there is a homeomorphism $(SE)^\infty \to E^\infty \times \N$ satisfying $\mu_{i,s(x)} x \to (x, i)$ for $x \in E^\infty$ and $i \in \N$. We identify $(SE)^\infty$ with $E^\infty \times \N$.

(III) $\implies$ (I): Suppose $\TSS_E$ and $\TSS_F$ are conjugate. Then there is a conjugacy $\phi: \TSS_E \to \TSS_F$ and an $l \in \N$ such that if $x, x' \in X_E$ with $x_n = x_n'$ for all $n \ge 0$, then $(\phi(x))_n = (\phi(x'))_n$ for all $n \ge 0$, and if $y,y' \in X_F$ with $y_n = y_n'$ for all $n \ge 0$, then $(\phi^{-1}(y))_n = (\phi^{-1}(y'))_n$ for all $n \ge l$. It follows that there is a continuous map $\pi: E^\infty \to F^\infty$ such that $(\pi((x_k)_{k\in\mathbb{N}}))_n=(\phi(x))_n$ for $x=(x_k)_{k\in\mathbb{Z}}\in\TSS_E$ and $n\in\mathbb{N}$. Then $\pi$ is surjective, $\pi \circ \sigma_E = \sigma_F \circ \pi$, and if $\pi(x) = \pi(x')$ for $x, x' \in E^\infty$, then $\sigma_E^l(x)=\sigma_E^l(x')$.

For an infinite path $x=(x_n)_{n\in\mathbb{N}}$ and $k\in\mathbb{N}$, we write $x_{[0,k)}$ for the finite path $x_0x_1\dots x_{k-1}$ of length $k$. It follows from the continuity of $\pi$ and the compactness of $E^\infty$ that we can choose $L\ge l$ such that $x_{[0,L)}=x'_{[0,L)}\implies \pi(x)_{[0,l)}=\pi(x')_{[0,l)}$. Define an equivalence relation $\sim$ on $E^L$ by $\mu\sim\nu$ if there are $x\in Z(\mu)$ and $x'\in Z(\nu)$ such that $\pi(x) = \pi(x')$ (that $\sim$ is transitive follows from the fact that if $\mu,\nu,\eta\in E^L$, $x,x'\in E^\infty$, and $\pi(\mu x)=\pi(\nu x)$ and $\pi(\nu x')=\pi(\eta x')$, then $\pi(\mu x')=\pi(\eta x')$). Then $\pi(x) = \pi(x')$ if and only if $x_{[0,L)}\sim x'_{[0,L)}$ and $\sigma_E^l(x)=\sigma_E^l(x')$.

For each equivalence class $B\in E^L/\sim$ choose a partition $\{A_\mu: \mu \in B \}$ of $\N$ and bijections $f_\mu: A_\mu \to \N$. The map $\psi: (x,n) \mapsto (\pi (x), f^{-1}_{x_{[0,L)}} (n))$ is then a homeomorphism from $(SE)^\infty \to (SF)^\infty$. It is routine to check that 
$$\Phi:\left((x,n),k,(x',n')\right) \mapsto \left(\psi(x,n),k+n'+f^{-1}_{x_{[0,L)}} (n)-n-f^{-1}_{x'_{[0,L)}} (n'),\psi(x',n')\right)$$ 
is a groupoid isomorphism from $\G_{SE}$ to $\G_{SF}$ satisfying $c_{k_F}(\Phi(\eta)) = c_{k_E}(\eta)$ for $\eta \in \G_{SE}$.

(I) $\implies$ (III): Suppose $\Phi: \G_{SE} \to \G_{SF}$ is an isomorphism satisfying $c_{k_F}(\Phi(\eta)) = c_{k_E}(\eta)$ for $\eta \in \G_{SE}$. For $x \in E^\infty$, we have $(x,0) \in (SE)^\infty$ and $((x,0), 0, (x,0)) \in \G_{SE}$. Since $\Phi$ is an isomorphism, we have $\Phi((x,0),0,(x,0)) = ((y,m),0,(y,m))$ for some uniquely determined $y \in F^\infty$ and $m \in \N$. Define $\psi: E^\infty \to F^\infty$ by $\psi(x) := y$. Since $\Phi$ is continuous, the map $(x,0) \mapsto (y,m)$ is continuous, so $\psi$ is also continuous.

We have $((x,0), 1, (\sigma_E(x),0)) \in \G_{SE}$ for $x \in E^\infty$. By the cocyle condition there exist $m,m' \in \N$ such that $\Phi((x,0), 1, (\sigma_E(x),0)) = ((\psi(x),m), 1+m-m', (\psi(\sigma_E(x)), m')) \in \G_{SE}$. Hence there exists $l \in \N$ such that $\sigma_F^{l+1} (\psi(x)) = \sigma_F^l(\psi(\sigma_E(x)))$; let $l(x)$ denote the smallest such number. We check that $l: E^\infty \to \N$ is continuous. Suppose $(x_n)_{n\in \N}$ in $E^\infty$ converges to $x$. Then $((x_n,0),1,(\sigma_E(x_n),0)) \to ((x,0), 1, (\sigma_E(x),0))$. Since $\Phi$ is continuous, there are $(m_n)_{n\in \N}$ and $(m_n')_{n\in \N}$ in $\N$ such that $((\psi(x_n),m_n),1+m_n-m'_n,(\psi(\sigma_E(x_n)),m_n')) \to ((\psi(x),m),1+m-m',(\psi(\sigma_E(x)),m'))$. It follows that $l(x_n) \to l(x)$, so $l: E^\infty \to \N$ is continuous. 

Since $E^\infty$ is compact, it follows that there is an $L \in \N$ such that $\sigma_E^{L+1}(\psi(x)) = \sigma_F^L(\psi(\sigma_E(x)))$ for all $x \in E^\infty$. Define $\varphi:= \sigma_F^L \circ \psi: E^\infty \to F^\infty$. Then $\varphi$ is continuous and satisfies $\varphi \circ \sigma_E = \sigma_F \circ \varphi$. 

For $x=(x_n)_{n\in\mathbb{Z}}$ in $\TSS_E$ or $\TSS_F$ and $k\in\mathbb{Z}$, let $x_{[k,\infty)}$ denote the infinite path $x_kx_{k+1}\dots $. Define $\overline \varphi: \TSS_E \to \TSS_F$ by $\overline (\varphi(x))_{[k,\infty)}=\varphi(x_{[k,\infty)})$ for $x \in \TSS_E$ and $k \in \Z$. Since $\varphi \circ \sigma_E = \sigma_F \circ \varphi$, it follows that $\overline \varphi$ is well-defined. It is routine to check that $\overline \varphi$ is continuous and that $\overline \varphi \circ \tss_E = \tss_F \circ \overline \varphi$. We will next show that $\overline\varphi$ is also bijective. It will then follow that $\overline\varphi$ is a conjugacy and thus that $\TSS_E$ and $\TSS_F$ are conjugate.

We first show that $\overline\varphi$ is injective. Suppose $x=(x_n)_{n\in\mathbb{N}},\ x'=(x'_n)_{n\in\mathbb{N}}\in E^\infty$ and $\varphi(x)=\varphi(x')$. Choose $m,m'\in\mathbb{N}$ such that $\Phi((x,0),0,(x,0))=((\psi(x),m),0,(\psi(x),m))$ and $\Phi((x',0),0,(x',0))=((\psi(x'),m'),0,(\psi(x'),m'))$. Since 
$\sigma_F^L(\psi(x))=\sigma_F^L(\psi(x')),$
it follows that $((\psi(x),m),m-m',(\psi(x'),m'))\in\G_{SF}$ and thus that 
$$((x,0),0,(x',0))=\Phi^{-1}(((\psi(x),m),m-m',(\psi(x'),m')))\in\G_{SE}.$$ 
It follows that there is a $k\in\mathbb{N}$ such that $\sigma_E^k(x)=\sigma_E^k(x')$. Let $k((x,x'))$ be the smallest such $k$. An argument similiar to the one used to prove that $l:E^\infty\to\N$ is continuous, shows that $k:\{(x,x')\in E^\infty\times E^\infty:\varphi(x)=\varphi(x')\}\to\mathbb{N}$ is continuous. Since $\{(x,x')\in E^\infty\times E^\infty:\varphi(x)=\varphi(x')\}$ is closed in $E^\infty\times E^\infty$ and thus compact, it follows that there exists $K\in\mathbb{N}$ such that $\sigma_E^K(x)=\sigma_E^K(x')$ for all $x,x'\in E^\infty$ satisfying $\varphi(x)=\varphi(x')$. The injectivity of $\overline\varphi$ easily follows.

Next, we show that $\overline\varphi$ is surjective. Suppose $y\in F^\infty$. Then 
$$\Phi^{-1}((y,0),0,(y,0))=((x,n),0,(x,n))$$ 
for some $x\in E^\infty$ and some $n\in\mathbb{N}$. Choose $m\in\mathbb{N}$ such that 
$$\Phi((x,0),0,(x,0))=((\psi(x),m),0,(\psi(x),m)).$$ 
Since $((x,0),-n,(x,n))\in\G_{SE}$ and $\Phi((x,0),-n,(x,n))=((\psi(x),m),m,(y,0))$, it follows that there is an $h\in\mathbb{N}$ such that $\sigma_F^h(\psi(x))=\sigma_F^h(y)$. An argument similar to the one used in the previous paragraph, then shows that there is an $H\in\mathbb{N}$ such that for each $y\in F^\infty$ there is an $x\in E^\infty$ such that $\sigma_F^H(\psi(x))=\sigma_F^H(y)$. The surjectivity of $\overline\varphi$ easily follows.
\end{proof}

Let $A$ be a finite square $\{0,1\}$-matrix, and assume that every row and every column of $A$ is nonzero. As in \cite{CK}, we denote by $(\TSS_A,\tss_A)$ the two-sided subshift of $A$. It follows from \cite[Theorem 2.3]{C2} that if $A$ and $B$ are finite square $\{0,1\}$-matrices such that $A$ and $B$ and their transpose satisfy condition (I), and $\TSS_A$ and $\TSS_B$ are conjugate, then there exists a $*$-isomorphism $\Psi:\Oo_A \otimes\mathcal{K}\to \Oo_B \otimes\mathcal{K}$ such that $\Psi(\mathcal{D}_{A}\otimes\mathcal{C})=\mathcal{D}_{B}\otimes\mathcal{C}$ and $(\lambda^B_z\otimes\id_\mathcal{K})\circ\Psi=\Psi\circ(\lambda^A_z\otimes\id_\mathcal{K})$ for all $z\in\mathbb{T}$. As a corollary to Theorem~\ref{thm:4}, we now prove the converse.

\begin{corollary} \label{cor:3}
	Let $A$ and $B$ be finite square $\{0,1\}$-matrices, and assume that every row and every column of $A$ and $B$ is nonzero. There a $*$-isomorphism $\Psi:\Oo_A \otimes\mathcal{K}\to \Oo_B \otimes\mathcal{K}$ such that $\Psi(\mathcal{D}_{A}\otimes\mathcal{C})=\mathcal{D}_{B}\otimes\mathcal{C}$ and $(\lambda^B_z\otimes\id_\mathcal{K})\circ\Psi=\Psi\circ(\lambda^A_z\otimes\id_\mathcal{K})$ for all $z\in\mathbb{T}$ if and only if $(\TSS_A,\tss_A)$ and $(\TSS_B,\tss_B)$ are conjugate.
\end{corollary}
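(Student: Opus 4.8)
The plan is to reduce the statement about the Cuntz--Krieger algebras $\Oo_A$ and $\Oo_B$ to the corresponding statement about the graph $C^*$-algebras $C^*(E_A)$ and $C^*(E_B)$, and then to invoke the equivalence (II) $\iff$ (III) of Theorem~\ref{thm:4}. This mirrors the proof of Corollary~\ref{cor:2}, with Theorem~\ref{thm:4} now playing the role that Theorem~\ref{thm:3} played there.

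First I would introduce the graph $E_A$ of $A$ exactly as in the proof of Corollary~\ref{cor:2}, and likewise $E_B$. Since $A$ is a finite square matrix, $E_A$ is a finite graph; and since every row and every column of $A$ is nonzero, every vertex of $E_A$ emits and receives at least one edge, so $E_A$ has neither sinks nor sources. Thus $E_A$ and $E_B$ are finite graphs with no sinks or sources, and condition (III) of Theorem~\ref{thm:4} is available.

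Next I would match up the dynamical and the algebraic data on the two sides. On the dynamical side, there is a homeomorphism from $\TSS_A$ to $\TSS_{E_A}$ that intertwines $\tss_A$ and $\tss_{E_A}$: because $A$ is a $\{0,1\}$-matrix, a bi-infinite sequence of states of $A$ corresponds to a unique bi-infinite sequence of edges of $E_A$, and vice versa. The same holds for $B$, so $(\TSS_A,\tss_A)$ and $(\TSS_B,\tss_B)$ are conjugate if and only if $\TSS_{E_A}$ and $\TSS_{E_B}$ are conjugate. On the algebraic side, there is a well-known $*$-isomorphism $\Oo_A \to C^*(E_A)$ carrying $\D_A$ onto $\D(E_A)$ and intertwining $\lambda^A$ with $\lambda^{E_A}$, and likewise for $B$. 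Tensoring these with $\id_\K$ yields $*$-isomorphisms $\Oo_A\otimes\K \to C^*(E_A)\otimes\K$ and $\Oo_B\otimes\K \to C^*(E_B)\otimes\K$ carrying $\D_A\otimes\mathcal{C}$ onto $\D(E_A)\otimes\mathcal{C}$ (respectively $\D_B\otimes\mathcal{C}$ onto $\D(E_B)\otimes\mathcal{C}$) and intertwining $\lambda^A_z\otimes\id_\K$ with $\lambda^{E_A}_z\otimes\id_\K$ (respectively $\lambda^B_z\otimes\id_\K$ with $\lambda^{E_B}_z\otimes\id_\K$). Conjugating by these isomorphisms shows that a $\Psi$ as in the statement exists if and only if there is a $*$-isomorphism $C^*(E_A)\otimes\K \to C^*(E_B)\otimes\K$ with the analogous diagonal- and gauge-preserving properties, that is, if and only if condition (II) of Theorem~\ref{thm:4} holds for $E_A$ and $E_B$.

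With these two translations in hand, the corollary follows at once from the equivalence (II) $\iff$ (III) of Theorem~\ref{thm:4} applied to $E_A$ and $E_B$. I do not expect any serious obstacle; the proof is essentially bookkeeping. The only points requiring genuine care are the verification that the state-shift $\TSS_A$ and the edge-shift $\TSS_{E_A}$ are truly conjugate (which is where the $\{0,1\}$-hypothesis on $A$ enters, guaranteeing that edges are determined by their endpoints) and the check that the isomorphism $\Oo_A \to C^*(E_A)$ can be chosen to respect the diagonal subalgebra and the gauge action \emph{simultaneously}, together with the observation that both properties survive tensoring by $\id_\K$ and $\mathcal{C}$; all of this is standard.
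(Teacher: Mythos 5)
Your proposal is correct and follows essentially the same route as the paper: introduce the graphs $E_A$ and $E_B$ of the matrices as in Corollary~\ref{cor:2}, identify the subshifts and the (stabilised) algebras on the two sides, and apply the equivalence (II) $\iff$ (III) of Theorem~\ref{thm:4}. The paper's own proof is just a terser version of this reduction, leaving the bookkeeping you spell out implicit.
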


\begin{proof}
As in the proof of Corollary~\ref{cor:2}, let $E_A$ be the graph of $A$, and $E_B$ the graph of $B$. The result then follows from the equivalence of (II) $\iff$ (VI) of Theorem~\ref{thm:4} applied to the graphs $E_A$ and $E_B$.
\end{proof}

\section{Acknowledgements}

The second author is grateful for the provision of travel support from Aidan Sims and from the Institute for Mathematics and its Applications at the University of Wollongong, and grateful for the hospitality of the first author and his family and the University of the Faroe Islands during his visit.

\end{document}